\newcounter{qcounter}
\newtheorem{theorem}{Theorem}[section]
\newtheorem{lemma}[theorem]{Lemma}
\newtheorem{proposition}[theorem]{Proposition}
\newtheorem{corollary}[theorem]{Corollary}
\theoremstyle{definition}
\numberwithin{equation}{section}
\begin{document}

\vspace{0.5in}

\renewcommand{\bf}{\bfseries}
\renewcommand{\sc}{\scshape}
\vspace{0.5in}

\title{Equivalence of $\mathbb{Z}_{4}$-actions on handlebodies of genus $g$}

\author{Jesse Prince-Lubawy}
\address{Department of Mathematics\\
University of North Alabama\\
Florence, AL 35632}
\email{jprincelubawy@una.edu}


\subjclass[2010]{Primary 57M60}

\keywords{handlebodies, orbifolds, graph of groups, orientation-preserving, cyclic actions}

\begin{abstract} 
In this paper we consider all orientation-preserving $\mathbb{Z}_{4}$-actions on $3$-dimensional handlebodies $V_g$ of genus $g>0$. We study the graph of groups $(\Gamma($v$),\mathbf{G(v)})$, which determines a handlebody orbifold $V(\Gamma($v$),{\mathbf{G(v)}})\simeq{V_g}/\mathbb{Z}_{4}$. This algebraic characterization is used to enumerate the total number of $\mathbb{Z}_{4}$ group actions on such handlebodies, up to equivalence.
\end{abstract}

\maketitle

\section{\bf Introduction}

A {$\mathbf{G}$-action} on  a handlebody $V_g$, of genus $g>0$, is a group monomorphism $\phi:{\mathbf{G}}\longrightarrow{}$Homeo$^+(V_g)$, where Homeo$^+(V_g)$ denotes the group of orientation-preserving homeomorphisms of $V_g$. Two actions $\phi_1$ and $\phi_2$ on $V_g$ are said to be equivalent if and only if there exists an orientation-preserving homeomorphism $h$ of $V_g$ such that $\phi_2(x)=h\circ\phi_1(x)\circ{h^{-1}}$ for all $x\in\mathbf{G}$. From \cite{MMZ}, the action of any finite group $\mathbf{G}$ on $V_g$ corresponds to a collection of graphs of groups.  We may assume these particular graphs of groups are in canonical form and satisfy a set of normalized conditions, which can be found in \cite{KM}.  

Let v\ $=(r,s,t,m,n)$ be an ordered $5$-tuple of nonnegative integers. The graph of groups $(\Gamma($v$),\mathbf{G(v)})$ in canonical form, shown in Figure 1, determines a handlebody orbifold $V(\Gamma($v$),\mathbf{G(v)})$. The orbifold $V(\Gamma($v$),{\mathbf{G(v)}})$ is constructed in a similar manner as described in \cite{KM}. Note that the quotient of any $\mathbb{Z}_{4}$-action on $V_g$ is an orbifold of this type, up to homeomorphism. 

\begin{figure}[hbtp]
\begin{center}
\begin{tikzpicture} 
\draw (3.75,2.65) node[anchor=north west, font=\footnotesize] {$\mathbb{Z}_2$};
\draw (2.75,2.65) node[anchor=north west, font=\footnotesize] {$\mathbb{Z}_2$};
\draw (-2,2.65) node[anchor=north west, font=\footnotesize] {$\mathbb{Z}_{4}$};
\draw (-0.85,2.65) node[anchor=north west, font=\footnotesize] {$\mathbb{Z}_{4}$};
\draw (1.95,2.65) node[anchor=north west, font=\footnotesize] {$\mathbb{Z}_2$};
\draw (1.5,3.45) node[anchor=north west, font=\footnotesize] {$\mathbb{Z}_2$};
\draw (0.5,3.45) node[anchor=north west, font=\footnotesize] {$\mathbb{Z}_2$};
\draw (-0.05,2.65) node[anchor=north west, font=\footnotesize] {$\mathbb{Z}_2$};
\draw (-4.95,2.65) node[anchor=north west, font=\footnotesize] {$\mathbb{Z}_{4}$};
\draw (-4.35,3.45) node[anchor=north west, font=\footnotesize] {$\mathbb{Z}_{4}$};
\draw (-2.8,2.65) node[anchor=north west, font=\footnotesize] {$\mathbb{Z}_{4}$};
\draw (-3.35,3.45) node[anchor=north west, font=\footnotesize] {$\mathbb{Z}_{4}$};
\draw (-0.25,-0.65) node[anchor=north west, font=\footnotesize] {$r$};
\draw (-3.75,2.5) node[anchor=north west, font=\footnotesize] {$s$};
\draw (-1.25,2.55) node[anchor=north west, font=\footnotesize] {$t$};
\draw (0.95,2.5) node[anchor=north west, font=\footnotesize] {$m$};
\draw (3.35,2.5) node[anchor=north west, font=\footnotesize] {$n$};
\draw (0,0) --(.75,2);
\draw (0,0) --(1.75,2);
\draw (0,0) --(-.5,2);
\draw (0,0) --(-3,2);
\draw (0,0) --(4,2);
\draw (0,0) --(3,2); 
\draw (0,0) --(-1.75,2);
\draw (0,0) --(-4,2);
\draw[rotate=45] (0,-.45) 
ellipse (4pt and 12pt);
\draw[rotate=-45] (0,-.45) 
ellipse (4pt and 12pt);
\draw (-4,2.45) ellipse (6pt and 12pt);
\draw (-3,2.45) ellipse (6pt and 12pt);
\draw (.75,2.45) ellipse (6pt and 12pt);
\draw (1.75,2.45) ellipse (6pt and 12pt);
\draw[fill] (3,2) circle (.05);
\draw[fill] (-4,2) circle (.05);
\draw[fill] (-3,2) circle (.05);
\draw[fill] (4,2) circle (.05);
\draw[fill] (-1.75,2) circle (.05);
\draw[fill] (0,0) circle (.05);
\draw[fill] (-.5,2) circle (.05);
\draw[fill] (1.75,2) circle (.05);
\draw[fill] (.75,2) circle (.05);
\draw[fill] (-0.2,-0.55) circle (.015);
\draw[fill] (0.2,-0.55) circle (.015);
\draw[fill] (0,-0.55) circle (.015);
\draw[fill] (-3.5,2) circle (.015);
\draw[fill] (-3.7,2) circle (.015);
\draw[fill] (-3.3,2) circle (.015);
\draw[fill] (1.2,2) circle (.015);
\draw[fill] (1.4,2) circle (.015);
\draw[fill] (1,2) circle (.015);
\draw[fill] (3.7,2) circle (.015);
\draw[fill] (3.3,2) circle (.015);
\draw[fill] (3.5,2) circle (.015);
\draw[fill] (-1.1,2) circle (.015);
\draw[fill] (-1.3,2) circle (.015);
\draw[fill] (-0.9,2) circle (.015);
\end{tikzpicture} 
\caption{($\Gamma($v$),{\mathbf{G(v)}})$}
\end{center}
\end{figure}


An explicit combinatorial enumeration of orientation-preserving $\mathbb{Z}_{p}$-actions on $V_g$, up to equivalence, is given in \cite{KM}. In this work we will be interested in examining the orientation-preserving geometric group actions on $V_g$ for the group $\mathbb{Z}_{4}$. The case for $\mathbb{Z}_{p^2}$, when $p$ is an odd prime is considered in \cite{JPL} and gives a different result. As we will see, there is exactly one equivalence class of $Z_4$-actions on the handlebody of genus 2. This result coincides with \cite{JK}. In this paper we will prove the following main theorem:



\begin{theorem}
If $\mathbb{Z}_4$ acts on $V_g$, where $g>0$, then $V_g/\mathbb{Z}_4$ is homeomorphic to $V(\Gamma({\mathbf{v}}),{\mathbf{G(v)}})$ for some 5-tuple ${\mathbf{v}}=(r,s,t,m,n)$ of nonnegative integers with $r+s+t+m+n>0$ and $g+3=4(r+s+m)+3t+2n$. The number of equivalence classes of $\mathbb{Z}_4$-actions on $V_g$ with this quotient type is $m$ if $r+s+t=0$, and $m+1$ if $r+s+t>0$.
\end{theorem}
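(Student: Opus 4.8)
The plan is to reduce the classification to a count of holonomy homomorphisms, following the correspondence of \cite{MMZ} and \cite{KM}. By that theory, once the quotient type $V(\Gamma(\mathbf{v}),\mathbf{G(v)})$ is fixed, the equivalence classes of $\mathbb{Z}_4$-actions with this quotient are in bijection with the orbits of \emph{admissible} homomorphisms $\omega:\pi_1(\Gamma(\mathbf{v}),\mathbf{G(v)})\to\mathbb{Z}_4$ under the group generated by the automorphisms of $\pi_1(\Gamma(\mathbf{v}),\mathbf{G(v)})$ realized by orientation-preserving homeomorphisms of the orbifold, together with relabeling of $\mathbb{Z}_4$. Here admissible means surjective (faithfulness and connectivity of the covering handlebody) and injective on each vertex group (so the prescribed local groups are exactly the isotropy groups). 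So first I would read off $\pi_1(\Gamma(\mathbf{v}),\mathbf{G(v)})$ from Figure 1: since the central vertex carries the trivial group and every edge to it is trivial, it is the free product of a free group $F_r$ (from the $r$ central loops) with one factor per arm, namely $\mathbb{Z}_4$ for each $t$-arm, $\mathbb{Z}_2$ for each $n$-arm, $\mathbb{Z}_4\times\mathbb{Z}$ for each $s$-arm, and $\mathbb{Z}_2\times\mathbb{Z}$ for each $m$-arm, the extra $\mathbb{Z}$ being the stable letter of the loop (the normalized attaching maps of \cite{KM} make these direct products).

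Next I would establish the two numerical constraints. The inequality $r+s+t+m+n>0$ simply records that the graph is nonempty, equivalently that the action is nontrivial. The relation $g+3=4(r+s+m)+3t+2n$ comes from the fact that $V_g\to V_g/\mathbb{Z}_4$ is a degree-$4$ orbifold covering, so $\chi(V_g)=4\,\chi^{\mathrm{orb}}$. Computing the weighted Euler characteristic $\chi^{\mathrm{orb}}=\sum_v |G_v|^{-1}-\sum_e|G_e|^{-1}$ of Figure 1 — a central $+1$, a $-1$ for each central loop and for each edge joining an arm to the center, and $+\tfrac14,+\tfrac12$ for the $\mathbb{Z}_4,\mathbb{Z}_2$ vertices against $-\tfrac14,-\tfrac12$ for their loops — yields $\chi^{\mathrm{orb}}=1-r-s-\tfrac{3t}{4}-m-\tfrac{n}{2}$, and substituting $\chi(V_g)=1-g$ gives the stated identity after rearrangement.

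Then I would carry out the orbit count. Admissibility forces $\omega$ to send each $\mathbb{Z}_4$ vertex group isomorphically onto $\mathbb{Z}_4$ and each $\mathbb{Z}_2$ vertex group onto the subgroup of order $2$, while on the stable letters and the central loops $\omega$ is unconstrained except through surjectivity. The homeomorphism-induced automorphisms permute arms of a common type, invert vertex generators, and alter each stable letter by inversion, by multiplication by its vertex group, and by slides; one checks that these moves render inessential the image of every $t$-, $n$-, and $s$-arm, as well as the images of the central loops and of the $s$-arm stable letters. The only surviving datum is, for each $m$-arm, whether its stable letter maps to a generator of $\mathbb{Z}_4$ or into the order-$2$ subgroup, and since $m$-arms are interchangeable the orbit is determined by the number $k$ of $m$-arms of the first kind, $0\le k\le m$. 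Inversion of $\mathbb{Z}_4$ preserves $k$. Surjectivity holds as soon as the image contains an element of order $4$, which it does for free whenever $r+s+t>0$; if $r+s+t=0$ the only order-$4$ elements must come from the $m$-arms, forcing $k\ge1$. Hence there are $m+1$ admissible values of $k$ when $r+s+t>0$ and exactly $m$ when $r+s+t=0$.

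The principal difficulty is the step asserting that $k$ is a \emph{complete} invariant. That distinct values of $k$ are inequivalent I would prove by exhibiting a conjugacy invariant detecting $k$, namely the local rotation data of $\mathbb{Z}_4$ along the cores arising from the $m$-arms, which distinguishes an order-$4$ twist from an order-$2$ one and is preserved by any equivariant homeomorphism. The harder direction is that, for fixed $k$, all admissible $\omega$ lie in a single orbit: this requires the explicit determination of which automorphisms of $\pi_1(\Gamma(\mathbf{v}),\mathbf{G(v)})$ are realized by orientation-preserving homeomorphisms of the handlebody orbifold, and a verification that the slides, twists, vertex-generator inversions, and arm permutations act transitively on all inessential data while fixing $k$. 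Pinning down this realizable automorphism group, and in particular confirming that no slide can change the number of order-$4$ $m$-arms, is the technical heart of the argument; the normalization conditions of \cite{KM} are what make it tractable.
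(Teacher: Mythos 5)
Your proposal follows essentially the same route as the paper: it reduces the classification to counting orbits of finite injective epimorphisms $\pi_1^{orb}(V(\Gamma(\mathbf{v}),\mathbf{G(v)}))\to\mathbb{Z}_4$ under automorphisms realized by orientation-preserving homeomorphisms of the orbifold (the paper's Proposition 2.1), normalizes using the Fuchs-Rabinovitch generators, and identifies the number $k$ of $m$-arm stable letters $f_i$ mapping to a generator of $\mathbb{Z}_4$ as the complete invariant, with surjectivity forcing $k\ge 1$ precisely when $r+s+t=0$ (the paper's Lemmas 2.2--2.4 and Proposition 2.5). Your Euler-characteristic derivation of $g+3=4(r+s+m)+3t+2n$ and your explicit treatment of the $r+s+t=0$ case agree with the paper's argument (and your inclusion of relabeling of $\mathbb{Z}_4$ is harmless, since inversion preserves $k$), so the proposal is correct and essentially identical in approach.
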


To illustrate the theorem, let $g=3$. Then the genus equation becomes $6=4(r+s+m)+3t+2n$ so that $r+s+m$ must equal 0 or 1, and $(r,s,t,m,n)$ is one of $(0,0,2,0,0)$, $(1,0,0,0,1)$, $(0,1,0,0,1)$, or $(0,0,0,1,1)$. Applying Theorem 1.1 to these four possibilities shows that there are a total of $1+1+1+1=4$ equivalence classes of orientation-preserving $\mathbb{Z}_4$-actions on $V_3$. Some results that follow directly from Theorem 1.1:

\begin{corollary}
Every $\mathbb{Z}_4$-action on a handlebody of even genus must have an interval of fixed points and at least two fixed points on the boundary of the handlebody.
\end{corollary}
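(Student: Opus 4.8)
The plan is to reduce the statement to a parity observation coming from the genus relation of Theorem 1.1, and then to read off the geometric meaning of the resulting parameter from the local structure of the graph of groups in Figure 1.

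First I would analyze the genus equation $g+3=4(r+s+m)+3t+2n$ modulo $2$. Since $4(r+s+m)$ and $2n$ are even and $3t\equiv t$, this gives $g\equiv t+1 \pmod 2$, so that $g$ is even if and only if $t$ is odd. In particular, for every $\mathbb{Z}_4$-action on a handlebody of even genus, the $5$-tuple $\mathbf{v}$ supplied by Theorem 1.1 must have $t\ge 1$; that is, the quotient orbifold necessarily contains at least one edge of $t$-type. As a sanity check, when $g=2$ the relation forces $(r,s,t,m,n)=(0,0,1,0,1)$, which indeed has $t=1$.

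Next I would identify what a $t$-edge contributes. In the construction the central vertex and the $t$-edge both carry trivial group, while the outer vertex of a $t$-edge carries $\mathbb{Z}_4$ and has no loop; this is exactly the local model of a $3$-ball on which $\mathbb{Z}_4$ acts by rotation of order $4$ about a diameter. Its quotient is a ball with a single singular arc labeled $\mathbb{Z}_4$, the image of the fixed diameter, whose two endpoints are the poles on the boundary sphere. Because the $t$-edge has trivial edge group, the ball is attached to the rest of the handlebody only along a free orbit of disks, which lies away from the two poles. Hence the fixed diameter lifts to a properly embedded arc in $V_g$ that is fixed pointwise by all of $\mathbb{Z}_4$ --- an interval of fixed points --- and its two endpoints survive as $\mathbb{Z}_4$-fixed points on $\partial V_g$. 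Assembling the two steps, even genus forces $t\ge 1$, which forces at least one such interval together with its two boundary endpoints, proving the corollary.

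The parity step is immediate; the substance --- and the main obstacle --- is the second step: justifying from the orbifold construction of \cite{KM} that a $t$-edge is precisely a $\mathbb{Z}_4$-rotated ball, that its fixed arc meets the boundary in two distinct points, and that these points are not consumed by the gluing. Verifying the group assignments along the graph of groups (for instance by checking that the stated vertex and edge groups reproduce $g+3=4(r+s+m)+3t+2n$ through the orbifold Euler characteristic identity $\chi(V_g)=4\,\chi^{orb}(V_g/\mathbb{Z}_4)$) is the part that requires care, but it is a routine consequence of the canonical form once the local models are in hand.
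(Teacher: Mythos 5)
Your proof is correct and follows exactly the route the paper intends: the paper states this corollary as following directly from Theorem 1.1, and the implicit argument is precisely your reduction of the genus equation $g+3=4(r+s+m)+3t+2n$ modulo $2$ to force $t$ odd, hence $t\geq 1$, together with the standard Kalliongis--Miller interpretation of a $t$-vertex (vertex group $\mathbb{Z}_4$, no loop, trivial edge group) as an invariant ball rotated about a fixed diameter whose endpoints lie on $\partial V_g$. Your write-up simply makes explicit the geometric details the paper leaves to the reader.
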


\begin{corollary}
Every $\mathbb{Z}_4$-action that is free on the boundary of the handlebody will have $t=n=0$ and $g\equiv1$(mod 4).
\end{corollary}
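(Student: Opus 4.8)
The plan is to translate the hypothesis of freeness on the boundary into the vanishing of the two coordinates $t$ and $n$, and then to read off the congruence directly from the genus equation of Theorem 1.1. Since every homeomorphism in the action carries $\partial V_g$ to itself and preserves the induced boundary orientation, the restricted $\mathbb{Z}_4$-action on the closed orientable surface $\partial V_g$ is again orientation-preserving, and its quotient $B=\partial V_g/\mathbb{Z}_4$ is a closed orientable $2$-orbifold. By definition the action is free on the boundary exactly when no nontrivial element of $\mathbb{Z}_4$ fixes a point of $\partial V_g$, that is, exactly when $B$ carries no cone points and $\partial V_g\to B$ is an honest $4$-fold covering.

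The first step is to identify which features recorded by $\mathbf{v}$ produce cone points on $B$. The fixed-point set of a nontrivial orientation-preserving periodic homeomorphism of $V_g$ is a properly embedded $1$-submanifold, so each component of the singular set is either a circle in the interior of $V_g$ or an arc with both endpoints on $\partial V_g$; only the arcs meet the boundary, hence only the arcs can create cone points on $B$. Unwinding the construction of the graph of groups from \cite{KM}, the four nontrivial edge-types correspond to these components as follows: the type-$s$ and type-$m$ edges (those terminating in a vertex carrying a loop) correspond to singular \emph{circles} in the interior of $V_g$, with isotropy $\mathbb{Z}_4$ and $\mathbb{Z}_2$ respectively, while the type-$t$ and type-$n$ edges correspond to singular \emph{arcs}, of isotropy $\mathbb{Z}_4$ and $\mathbb{Z}_2$, whose endpoints lie on $\partial V_g$; the type-$r$ edges carry trivial edge group and merely record handles on which $\mathbb{Z}_4$ acts freely. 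Thus an endpoint of a type-$t$ arc is a boundary point with stabilizer $\mathbb{Z}_4$ (an order-$4$ cone point of $B$) and an endpoint of a type-$n$ arc is a boundary point with stabilizer $\mathbb{Z}_2$ (an order-$2$ cone point of $B$), whereas the type-$r$, $s$, and $m$ data contribute no boundary fixed points.

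Granting this dictionary, the conclusion $t=n=0$ is immediate: if the action is free on $\partial V_g$ then $B$ has no cone points, so there can be no boundary-meeting singular arc of either isotropy, forcing $t=0$ and $n=0$. For the congruence I substitute $t=n=0$ into the genus equation $g+3=4(r+s+m)+3t+2n$ of Theorem 1.1, obtaining $g+3=4(r+s+m)$, whence $g=4(r+s+m)-3\equiv-3\equiv1\pmod 4$, as claimed. As an independent check, the same congruence falls out of Riemann--Hurwitz applied to the free $4$-fold cover $\partial V_g\to B$: writing $h$ for the genus of $B$, freeness gives $2-2g=4(2-2h)$, hence $g=4h-3\equiv1\pmod 4$.

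The hard part will be making the dictionary of the second step fully rigorous, namely verifying directly from the construction of $V(\Gamma(\mathbf{v}),\mathbf{G(v)})$ that the looped vertices ($s$ and $m$) really do correspond to interior singular circles that miss $\partial V_g$, while the unlooped vertices ($t$ and $n$) correspond to arcs that necessarily reach $\partial V_g$. This amounts to pinning down the local model of each vertex orbifold and the placement of its singular set relative to the boundary, which is precisely where the normalized canonical form of \cite{KM} does the work; the genus-$1$ cases (for instance a solid torus rotated about its core, realizing $s=1$) already confirm that a looped vertex need not break boundary-freeness. Once the position of the singular set relative to $\partial V_g$ is settled for each edge-type, the remaining counting and the substitution into the genus equation are routine.
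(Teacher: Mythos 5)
Your proposal is correct and matches the paper's (implicit) argument: the paper offers no explicit proof, asserting the corollary ``follows directly from Theorem 1.1,'' and your derivation --- identifying the $t$- and $n$-edges as the singular arcs whose endpoints are the only fixed points on $\partial V_g$, so freeness on the boundary forces $t=n=0$, then substituting into $g+3=4(r+s+m)+3t+2n$ to get $g\equiv 1\pmod 4$ --- is exactly that direct reading of the theorem together with the standard dictionary from \cite{KM}. The Riemann--Hurwitz cross-check is a nice independent confirmation but not needed beyond the genus equation.
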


\section{\bf The Main Theorem}

The orbifold fundamental group of $V(\Gamma($v$), {\mathbf{G(v)}})$ is an extension of $\pi_1(V_g)$ by the group $\mathbf{G}$. We may view the fundamental group  as a free product $G_1*G_2*G_3*\cdots*{G_{r+s+t+m+n}}$, where $G_i$ is isomorphic to either $\mathbb{Z}$, $\mathbb{Z}_{4}\times\mathbb{Z}$, $\mathbb{Z}_{4}$, $\mathbb{Z}_2\times\mathbb{Z}$, or $\mathbb{Z}_2$. We establish notation similar to \cite{KM} and denote the generators of the orbifold fundamental group by $\{a_i: 1\leq{i}\leq{r}\}\cup\{b_j,c_j:1\leq{j}\leq{s}\}\cup\{d_k:1\leq{k}\leq{t}\}\cup\{e_l,f_l:1\leq{l}\leq{m}\}\cup\{g_q:1\leq{q}\leq{n}\}$ such that $b_j^{4}=d_k^{4}=1$, $[b_j,c_j]=1$, $e_l^2=g_q^2=1$, and $[e_l,f_l]=1$. 

Consider the set of pairs $((\Gamma({\mathbf{v}}),{\mathbf{G(v)}}),\lambda)$, where $\lambda$ is a finite injective epimorphism from $\pi_1^{orb}(V(\Gamma({\mathbf{v}}),{\mathbf{G(v)}}))$ onto $\mathbb{Z}_{4}$. We say $\lambda$ is finite injective since the kernel of $\lambda$ is a free group of rank $g$. We consider only finite injective epimorphisms such that ker($\lambda$)$=$im($\nu_*$) for some orbifold covering $\nu:V\longrightarrow V(\Gamma({\mathbf{v}}),{\mathbf{G(v)}})$. Since $V$ is a handlebody with torsion free fundamental group, $V$ is homeomorphic to a handlebody $V_g$ of genus $g=1-4\chi(\Gamma({\mathbf{v}}),{\mathbf{G(v)}})$. Define an equivalence relation on this set of pairs by setting $((\Gamma({\mathbf{v}}),{\mathbf{G(v)}}),\lambda)\equiv(({\Gamma}({\mathbf{v}}),{\mathbf{G(v)}}),\lambda')$ if and only if there exists an orbifold homeomorphism $h:V(\Gamma({\mathbf{v}}),{\mathbf{G(v)}})\longrightarrow{V(\Gamma({\mathbf{v}}),{\mathbf{G(v)}})}$ such that $\lambda'=\lambda\circ{h_*}$. We define the set $\Delta(\mathbb{Z}_{4},V_g,V(\Gamma({\mathbf{v}}),{\mathbf{G(v)}}))$ to be the set of equivalence classes $[((\Gamma({\mathbf{v}}),{\mathbf{G(v)}}),\lambda)]$ under this relation.

Denote the  set set of equivalence classes $\mathscr{E}(\mathbb{Z}_{4},V_g,V(\Gamma({\mathbf{v}}),{\mathbf{G(v)}}))$ to be the set  $\{[\phi]\ |\ \phi:\mathbb{Z}_{4}\longrightarrow{Homeo^+(V_g)}\ and\ V_g/\phi\simeq{V(\Gamma({\mathbf{v}}),{\mathbf{G(v)}})}\}$. Note that given any $\mathbb{Z}_{4}$-action $\phi:\mathbb{Z}_{4}\longrightarrow{Homeo^+(V_g)}$, it must be the case that for some $V(\Gamma($v$),{\mathbf{G(v)}})$, $[\phi]\in\mathscr{E}(\mathbb{Z}_{4},V_g,V(\Gamma({\mathbf{v}}),{\mathbf{G(v)}}))$. The following proposition has a similar proof technique as found in \cite{KM}.

\begin{proposition}
Let ${\mathbf{v}}=(r,s,t,m,n)$. The set $\mathscr{E}(\mathbb{Z}_{4},V_g,V(\Gamma({\mathbf{v}}))$ is in one-to-one correspondence with the set $\Delta(\mathbb{Z}_{4},V_g,V(\Gamma({\mathbf{v}}),{\mathbf{G(v)}}))$ for every $g>0$.
\end{proposition}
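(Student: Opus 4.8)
The plan is to construct explicit maps in both directions and show they are mutually inverse, following the covering-space correspondence of \cite{KM}; write $O=V(\Gamma(\mathbf{v}),\mathbf{G(v)})$ for brevity. To define a map $\Phi\colon\mathscr{E}\to\Delta$, take a class $[\phi]$; the hypothesis gives an orbifold homeomorphism $\iota\colon V_g/\phi\to O$, and the orbit map $p\colon V_g\to V_g/\phi$ followed by $\iota$ is a regular orbifold covering with deck group $\phi(\mathbb{Z}_4)$. Regularity yields a canonical isomorphism $\pi_1^{orb}(O)/\operatorname{im}((\iota\circ p)_*)\cong\phi(\mathbb{Z}_4)$; composing the quotient projection with $\phi^{-1}$ gives an epimorphism $\lambda_\phi\colon\pi_1^{orb}(O)\to\mathbb{Z}_4$ whose kernel is $\operatorname{im}((\iota\circ p)_*)\cong\pi_1(V_g)$, a free group of rank $g$ realized by the orbifold covering $\iota\circ p$. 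Thus $\lambda_\phi$ is a finite injective epimorphism of the required type, and I set $\Phi([\phi])=[(O,\lambda_\phi)]$.

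The content of well-definedness is that both remaining choices are absorbed by the equivalence relation on $\Delta$. A different identification $\iota'$ differs from $\iota$ by an orbifold self-homeomorphism $h=\iota'\circ\iota^{-1}$ of $O$, which replaces $\lambda_\phi$ by $\lambda_\phi\circ h_*$ --- exactly the defining relation of $\Delta$. If instead $\phi_2(x)=h_0\circ\phi_1(x)\circ h_0^{-1}$ for some orientation-preserving $h_0$ of $V_g$, then $h_0$ sends $\phi_1$-orbits to $\phi_2$-orbits and descends to an orbifold homeomorphism $\bar h_0$ of the quotients; the identity $\phi_2^{-1}(h_0\circ\phi_1(x)\circ h_0^{-1})=x$ shows that the two identifications of the deck groups with the \emph{same} abstract $\mathbb{Z}_4$ are intertwined, so $\lambda_{\phi_1}$ and $\lambda_{\phi_2}$ differ only by precomposition with $(\bar h_0)_*$, again a $\Delta$-equivalence.

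Next I would define $\Psi\colon\Delta\to\mathscr{E}$. Given $[(O,\lambda)]$, let $\nu\colon V\to O$ be the orbifold covering with $\operatorname{im}(\nu_*)=\ker\lambda$; as $\ker\lambda$ is torsion-free and free of rank $g$, $V$ is a handlebody, and I fix a homeomorphism $V\cong V_g$. This covering is regular, and $\lambda$ identifies its deck group $\pi_1^{orb}(O)/\ker\lambda$ with $\mathbb{Z}_4$, giving a monomorphism $\phi_\lambda\colon\mathbb{Z}_4\to\mathrm{Homeo}^+(V_g)$ with quotient $O$; set $\Psi([(O,\lambda)])=[\phi_\lambda]$. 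Its well-definedness mirrors the previous paragraph: replacing $\lambda$ by $\lambda\circ h_*$ replaces $\ker\lambda$ by $h_*^{-1}(\ker\lambda)$, the homeomorphism $h$ lifts to a homeomorphism of the two covers conjugating one deck action to the other while preserving the $\mathbb{Z}_4$-labeling, and the auxiliary choice of $V\cong V_g$ changes $\phi_\lambda$ only within its equivalence class.

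Finally, that $\Phi$ and $\Psi$ are mutually inverse is the Galois correspondence for regular orbifold coverings: $\Psi\circ\Phi$ rebuilds the orbit covering and recovers $\phi$ up to equivalence, while $\Phi\circ\Psi$ reads the deck epimorphism off the constructed cover and returns $\lambda$ up to the identifications already absorbed into the relation on $\Delta$. I expect the main obstacle to be the bookkeeping of the identification of each deck group with the \emph{fixed} abstract group $\mathbb{Z}_4$: one must verify that neither equivalence relation secretly permits an automorphism of $\mathbb{Z}_4$, and that every conjugating or lifting homeomorphism intertwines the labelings exactly, so that the result is a genuine bijection rather than one valid only after a further quotient by $\operatorname{Aut}(\mathbb{Z}_4)$.
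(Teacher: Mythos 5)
Your overall strategy --- the covering-space/Galois correspondence between actions-with-quotient-$O$ and finite injective epimorphisms $\pi_1^{orb}(O)\to\mathbb{Z}_4$ --- is exactly the technique the paper invokes (the paper gives no written proof; it defers to the argument of Kalliongis--Miller), and your construction of $\Phi$ and $\Psi$, together with the observation that neither equivalence relation permits postcomposition by an automorphism of $\mathbb{Z}_4$, is sound as far as it goes. Your handling of the deck-group labeling is also unproblematic here precisely because $\mathbb{Z}_4$ is abelian, so the identification of the deck group with $\pi_1^{orb}(O)/\ker\lambda$ does not depend on a choice of basepoint in the fiber.

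However, there is a genuine gap that your final paragraph does not identify: the two equivalence relations are \emph{not} symmetric with respect to orientation. The relation on $\mathscr{E}$ only allows conjugation by \emph{orientation-preserving} homeomorphisms of $V_g$, whereas the relation defining $\Delta$, as the paper states it, allows an arbitrary orbifold homeomorphism $h$ of $O$ (and $O$ carries no preferred orientation). This bites in exactly the steps you wave through: in the well-definedness of $\Psi$ and in $\Psi\circ\Phi=\mathrm{id}$, the homeomorphism $h$ of $O$ with $\lambda'=\lambda\circ h_*$ lifts to a homeomorphism $K$ of $V_g$ conjugating $\phi_\lambda$ to $\phi_{\lambda'}$ with the labels intertwined --- but if $h$ is orientation-reversing then so is $K$, and this is not an equivalence in $\mathscr{E}$. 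The missing ingredient is a reflection lemma: each handlebody orbifold $V(\Gamma(\mathbf{v}),\mathbf{G(v)})$ admits an orientation-reversing involution fixing a spine pointwise, hence inducing the identity on $\pi_1^{orb}$; its lift to $V_g$ is an orientation-reversing homeomorphism $R$ commuting with the given action (commuting, not merely normalizing, because the induced map on the deck group is the identity). Replacing $K$ by $K\circ R$ then converts any orientation-reversing equivalence of actions into an orientation-preserving one, and with that your two maps are genuinely well defined and mutually inverse. Without it, your argument only establishes a bijection between $\Delta$ and classes of actions up to conjugation by \emph{arbitrary} homeomorphisms of $V_g$, which is a priori a coarser quotient than $\mathscr{E}$.
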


To prove the main theorem, we count the number of elements in the delta set and use the one-to-one correspondence given in Proposition 2.1 to give the total count for the set $\mathscr{E}(\mathbb{Z}_{4},V_g,V(\Gamma({\mathbf{v}}),{\mathbf{G(v)}}))$. We resort to the following lemma to help count the number of elements in the delta set. The proof is an adaptation from \cite{KM}.

\begin{lemma}
\label{real}
If $\alpha$ is an automorphism of $\pi_1^{orb}(V(\Gamma({\mathbf{v}}),\mathbf{G(v)}))$, then $\alpha$ is realizable $[\alpha=h_*$ for some orientation-preserving homeomorphism $h:V(\Gamma({\mathbf{v}}),{\mathbf{G(v)}})\longrightarrow{V(\Gamma({\mathbf{v}}),{\mathbf{G(v)}})}]$ if and only if 
\begin{align}
\alpha(b_j)&=x_jb_{\sigma(j)}^{\varepsilon_j}x_j^{-1},\notag \\
\alpha(c_j)&=x_jb_{\sigma(j)}^{{v_j}}c_{\sigma(j)}^{\varepsilon_j}x_j^{-1},\notag \\
\alpha(d_k)&=y_kd_{\tau(k)}^{\delta_k}y_k^{-1},\notag \\
\alpha(e_l)&=u_le_{\gamma(l)}^{\varepsilon_l'}u_l^{-1},\notag \\
\alpha(f_l)&=u_le_{\gamma(l)}^{{w_l}}f_{\gamma(l)}^{\varepsilon_l'}u_l^{-1}, and\notag \\
\alpha(g_q)&=z_qg_{\xi(q)}^{\delta_q'}z_q^{-1},\notag
\end{align}
for some $x_j, y_k, u_l, z_q\in{\pi_1^{orb}(V(\Gamma({\mathbf{v}}),\mathbf{G(v)}))}$; $\sigma\in{\sum_s}$, $\tau\in{\sum_t}$, $\gamma\in{\sum_m}$, $\xi\in{\sum_n}$; $\varepsilon_j, \delta_k, \varepsilon_l', \delta_q'\in{\{+1,-1\}}$; and $0\leq{v_j}<4$, $0\leq{w_l}<2$.\\
Note that $\Sigma_l$ is the permutation group on l letters.
\end{lemma}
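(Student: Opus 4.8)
The plan is to prove the two implications separately, following the strategy of \cite{KM} but tracking the extra torsion types that arise for $\mathbb{Z}_4$. The guiding principle in both directions is that an orientation-preserving self-homeomorphism $h$ of $V(\Gamma(\mathbf{v}),\mathbf{G(v)})$ must carry the singular locus to itself, preserving the order of each local isotropy group and the topological type of each singular component, while imposing no condition on the free handles carried by the $a_i$; conversely, each elementary automorphism of the stated form is produced by a concrete geometric move, and realizable automorphisms are closed under composition.

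For necessity, suppose $\alpha=h_*$. The first step is purely algebraic: in the free product $\pi_1^{orb}=G_1*\cdots*G_{r+s+t+m+n}$ every element of finite order is conjugate into a single factor, so $\alpha$ sends each torsion generator to a conjugate of a power of some torsion generator, and since $\alpha$ preserves order the order-$4$ generators $\{b_j\}\cup\{d_k\}$ are permuted among themselves, as are the order-$2$ generators $\{e_l\}\cup\{g_q\}$. To see that the $b_j$ cannot be exchanged with the $d_k$ (nor the $e_l$ with the $g_q$) I would compare centralizers: the centralizer of $b_j$ is $\langle b_j\rangle\times\langle c_j\rangle\cong\mathbb{Z}_4\times\mathbb{Z}$, whereas the centralizer of $d_k$ is the finite group $\langle d_k\rangle\cong\mathbb{Z}_4$, and an automorphism preserves the isomorphism type of centralizers. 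Hence $b_j$ maps to a conjugate of $b_{\sigma(j)}^{\pm1}$ and $d_k$ to a conjugate of $d_{\tau(k)}^{\pm1}$, producing $\sigma,\tau$ (and analogously $\gamma,\xi$); the conjugators are recorded as $x_j,y_k,u_l,z_q$ and the exponents as $\varepsilon_j,\delta_k,\varepsilon_l',\delta_q'$, the latter being forced since $\mathrm{Aut}(\mathbb{Z}_4)=\{\pm1\}$. Restricting $\alpha$ to the peripheral factor $\langle b_j,c_j\rangle\cong\mathbb{Z}_4\times\mathbb{Z}$ then yields $c_j\mapsto x_jb_{\sigma(j)}^{v_j}c_{\sigma(j)}^{\pm1}x_j^{-1}$ with $0\le v_j<4$, and likewise $f_l\mapsto u_le_{\gamma(l)}^{w_l}f_{\gamma(l)}^{\pm1}u_l^{-1}$ with $0\le w_l<2$.

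The delicate point, and what I expect to be the main obstacle, is that the sign on $c_j$ must agree with the sign on $b_j$ (and similarly for $e_l,f_l$), which is \emph{not} a consequence of group theory alone: as abstract automorphisms of $\mathbb{Z}_4\times\mathbb{Z}$ the two signs are independent. Here I would argue geometrically using orientation-preservation. The factor $\langle b_j,c_j\rangle$ is carried by a solid-torus $1$-handle whose core is a circle of $\mathbb{Z}_4$-isotropy, with $c_j$ the longitude and $b_j$ a meridional loop around the singular core realizing the rotation. Since the orientation of the solid torus is the product of the orientation of a meridian disk with that of the core circle, reversing the longitude $c_j$ forces reversing the disk orientation, and reversing the disk sends the rotation generator to its inverse, so $b_j\mapsto b_j^{-1}$; an orientation-preserving $h$ therefore couples the two signs, which is exactly why a single $\varepsilon_j$ appears on both $b_j$ and $c_j$. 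For the $\mathbb{Z}_2$ handles the same reflection occurs, but $e_l=e_l^{-1}$ renders the meridional sign invisible, so only the longitudinal sign on $f_l$ is recorded.

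For sufficiency I would realize each elementary automorphism by an orientation-preserving homeomorphism. Inner automorphisms come from basepoint slides; a permutation $\sigma$ (resp. $\tau,\gamma,\xi$) is realized by a homeomorphism interchanging handles or spanning arcs of the same local type, which exists precisely because those components are homeomorphic; the inversions $\varepsilon_j=-1$, etc., are realized by the orientation-preserving reflections described above (longitude reversal together with disk reversal); and the twist $v_j$ is realized by cutting the $\mathbb{Z}_4$-handle along a meridian disk and regluing by the $v_j$-th power of the order-$4$ rotation, which is an orbifold symmetry fixing $b_j$ and sending $c_j\mapsto b_j^{v_j}c_j$, with $w_l$ handled by the analogous regluing by a power of the order-$2$ rotation. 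Finally, because the generators $a_i$ are unconstrained and every automorphism of the free part of a handlebody is geometrically realizable, these moves generate all automorphisms of the claimed form, giving the equivalence. The step requiring the most care throughout is confirming that each realizing homeomorphism is simultaneously orientation-preserving and compatible with the orbifold structure, which is precisely the sign-coupling analysis of the previous paragraph.
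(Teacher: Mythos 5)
Your necessity argument is essentially sound, and it is the argument underlying the source the paper defers to (the paper's own ``proof'' of this lemma is the citation to \cite{KM} together with the remark about Fuchs-Rabinovitch generators that follows it): the Kurosh theorem plus the centralizer comparison ($\mathbb{Z}_4\times\mathbb{Z}$ versus $\mathbb{Z}_4$, and $\mathbb{Z}_2\times\mathbb{Z}$ versus $\mathbb{Z}_2$) gives the algebraic form with a single conjugator per factor, and the orientation computation on a singular solid torus (orientation $=$ disk orientation times core orientation) correctly forces the shared sign $\varepsilon_j$ on $b_j$ and $c_j$, the coupling being invisible on $e_l$ because $e_l=e_l^{-1}$. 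That sign-coupling observation is indeed the geometric heart of the lemma.

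The genuine gap is in sufficiency, at the sentence ``these moves generate all automorphisms of the claimed form.'' This is the main algebraic input, not a bookkeeping remark, and for your list of moves it is false. Your moves are inner automorphisms, permutations of like factors, coupled inversions, meridional twists, and automorphisms of the free part $\langle a_1,\dots,a_r\rangle$. Take $r,s\geq 1$ and let $\alpha$ be the automorphism with $\alpha(a_1)=b_1a_1$ and $\alpha$ the identity on every other generator. It is of the stated form (the form places no restriction on the images of the $a_i$), and it is realizable by sliding one end of a free handle around the singular circle; but it is not a product of your moves, because on the abelianization every one of your moves carries the subgroup generated by $\bar{a}_1,\dots,\bar{a}_r$ into itself (inner automorphisms act trivially, permutations, inversions and twists do not involve the $\bar{a}_i$, and free-part automorphisms keep them in their own block), whereas $\alpha$ sends $\bar{a}_1$ to $\bar{b}_1+\bar{a}_1$. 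A second omission of the same kind: the stated form allows an independent conjugator $x_j,y_k,u_l,z_q$ for each factor, while basepoint slides conjugate all generators by one common element, so your moves also cannot normalize the conjugators. What is missing is exactly the remaining Fuchs-Rabinovitch generators --- the transvections $\rho_{ji}(x),\lambda_{ji}(x)$ (handle-end slides) and the partial conjugations $\mu_{ji}(x)$ (slides of an entire singular handle) --- together with the theorem of \cite{FR} that these, with permutations and factor automorphisms, generate the full automorphism group of the free product, and a reduction argument (normalize the permutation, then the factor isomorphisms, then the conjugators, then the free part) as in \cite{KM} showing that an automorphism of the stated form decomposes into realizable generators only. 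The appeal to ``every automorphism of the free part of a handlebody is geometrically realizable'' concerns $\mathrm{Aut}(F_r)$ alone and cannot see automorphisms that mix free generators with torsion elements, so it cannot close this gap.
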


Note that from \cite{FR}, a generating set for the automorphisms of the orbifold fundamental group $\pi_1^{orb}(V(\Gamma($v$),\mathbf{G(v)}))$ is the set of mappings $\{\rho_{ji}(x), \lambda_{ji}(x), \mu_{ji}(x), \omega_{ij}, \sigma_i, \phi_i\}$ whose definitions may be found in \cite{FR}. The first five maps are realizable. The realizable $\phi_i$'s are of the form found in Lemma 2.2 and will be used in the remaining arguments of this paper.

\begin{lemma}
Let ${\mathbf{v}}=(r,s,t,m,n)$ with $m>0$ and let \\$\lambda_1,\lambda_2:\pi_1^{orb}(V(\Gamma({\mathbf{v}}),{\mathbf{G(v)}}))\longrightarrow\mathbb{Z}_{4}$ be two finite injective epimorphisms such that there exists a $j$ with $\lambda_1(f_j)$ being a generator of $\mathbb{Z}_{4}$ and $\lambda_2(f_i)$ is not a generator of $\mathbb{Z}_{4}$ for all $i$. Then $\lambda_1$ and $\lambda_2$ are not equivalent.
\end{lemma}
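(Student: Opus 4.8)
\emph{The plan is to} argue by contradiction, converting the equivalence of $\lambda_1$ and $\lambda_2$ into an algebraic constraint through the explicit description of realizable automorphisms in Lemma~\ref{real}. Suppose, toward a contradiction, that $\lambda_1$ and $\lambda_2$ are equivalent. Then by definition there is an orientation-preserving orbifold homeomorphism $h$ with $\lambda_2=\lambda_1\circ h_*$, and $\alpha:=h_*$ is a realizable automorphism of $\pi_1^{orb}(V(\Gamma(\mathbf{v}),\mathbf{G(v)}))$. By the ``only if'' direction of Lemma~\ref{real}, $\alpha$ must act on the generators $f_l$ by $\alpha(f_l)=u_l e_{\gamma(l)}^{w_l} f_{\gamma(l)}^{\varepsilon_l'} u_l^{-1}$ for some $u_l$, some permutation $\gamma\in\Sigma_m$, some $\varepsilon_l'\in\{+1,-1\}$, and some $w_l\in\{0,1\}$. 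Feeding these into $\lambda_2=\lambda_1\circ\alpha$ is the starting point.

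The key reduction is that $\mathbb{Z}_4$ is abelian, so the conjugating factor $\lambda_1(u_l)$ cancels and, writing $\mathbb{Z}_4$ additively, $\lambda_2(f_l)=w_l\,\lambda_1(e_{\gamma(l)})+\varepsilon_l'\,\lambda_1(f_{\gamma(l)})$. \emph{First I would} record that $\lambda_1(e_{\gamma(l)})$ lands in the unique order-$2$ subgroup $\{0,2\}$ of $\mathbb{Z}_4$, since $e_{\gamma(l)}^2=1$ forces $\lambda_1(e_{\gamma(l)})$ to have order dividing $2$; hence $w_l\,\lambda_1(e_{\gamma(l)})\in\{0,2\}$ as well. \emph{Next}, because $\gamma$ is a permutation of $\{1,\dots,m\}$, I may choose the index $l_0$ with $\gamma(l_0)=j$, where $j$ is the index for which $\lambda_1(f_j)$ is a generator. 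Then $\varepsilon_{l_0}'\,\lambda_1(f_j)$ equals $\pm\lambda_1(f_j)$, and since $-1\equiv 3$ in $\mathbb{Z}_4$ this is again a generator, as the generator set $\{1,3\}$ is closed under negation.

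The crux of the argument, and the step I expect to carry the real content, is the observation that the generators $\{1,3\}$ of $\mathbb{Z}_4$ form precisely the nontrivial coset $1+\{0,2\}$ of the order-$2$ subgroup. Consequently, adding any element of $\{0,2\}$ to a generator yields a generator. Applying this to $\lambda_2(f_{l_0})=(\text{an element of }\{0,2\})+(\text{a generator})$ shows that $\lambda_2(f_{l_0})$ is itself a generator of $\mathbb{Z}_4$, contradicting the hypothesis that $\lambda_2(f_i)$ is a non-generator for every $i$. This contradiction forces $\lambda_1$ and $\lambda_2$ to be inequivalent. The only points requiring care are invoking Lemma~\ref{real} to pin down the shape of $\alpha$ on the $f$-generators, and tracking that the $e$-contribution, forced into the order-$2$ subgroup, is exactly the piece that is incapable of changing generator status in $\mathbb{Z}_4$; everything else is routine abelianization in the target group.
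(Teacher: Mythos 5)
Your proof is correct and follows essentially the same route as the paper's: assume equivalence, invoke Lemma~\ref{real} to pin down the form of the realizable automorphism on the $f$-generators, and derive a parity contradiction in $\mathbb{Z}_4$. The only difference is cosmetic --- the paper first normalizes $\lambda_2$ so that $\lambda_2(f_i)=0$ for all $i$ (by composing with the realizable automorphisms $\phi_i$) before computing, whereas you bypass that normalization by observing that the generators $\{1,3\}$ form the nontrivial coset of $\{0,2\}$, so the $e$-contribution cannot change generator status; this is a slightly cleaner handling of the same arithmetic.
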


\begin{proof}
Let $\lambda_1,\lambda_2:\pi_1^{orb}(V(\Gamma($v$),{\mathbf{G(v)}}))\longrightarrow\mathbb{Z}_{4}$ be two finite injective epimorphisms such that $\lambda_1$ sends $f_j$ to a generator of $\mathbb{Z}_{4}$ for some $j$ and $\lambda_2$ does not send $f_i$ to a generator of $\mathbb{Z}_{4}$ for all $i$. We may assume that ${\lambda_2}(f_i)=0$ for all $i$ by composing $\lambda_2$ with the realizable automorphism $\prod\phi_i$, where $\phi_i$ sends the generator $f_i$ to the element $e_i^{w_i}f_i$ and leaves all other generators fixed. Note that $w_i=0$ if $\lambda_2(f_i)=0$ and $w_i=1$ if $\lambda_2(f_i)=2$. To show that $\lambda_1$ and ${\lambda_2}$ are not equivalent we will consider the element $f_j$ such that $\lambda_1(f_j)$ generates $\mathbb{Z}_{4}$. For contradiction, assume that $\lambda_1$ is equivalent to ${\lambda_2}$. Then by Lemma \ref{real}, there exists a realizable automorphism $\alpha$ such that $\alpha(f_j)=ue_m^wf_m^{\pm1}u^{-1}$, where $u\in{\pi_1^{orb}(V(\Gamma(}$v${),{\mathbf{G(v)}}))}$ and $0\leq{w}<{2}$. Hence $\lambda_1(f_j)=w{\lambda_2}(e_m)$, where ${\lambda_2}(e_m)$ is a multiple of $2$, and hence $\lambda_1(f_j)$ is a multiple of $2$. This is impossible since $\lambda_1(f_j)$ is a generator of $\mathbb{Z}_{4}$. Therefore $\lambda_1$ and $\lambda_2$ cannot be equivalent, proving the lemma.
\end{proof}

\begin{lemma}
Let ${\mathbf{v}}=(r,s,t,m,n)$ and let $\lambda:\pi_1^{orb}(V(\Gamma({\mathbf{v}}), {\mathbf{G(v)}}))\longrightarrow{\mathbb{Z}_{4}}$ be a finite injective epimorphism. There exists a finite injective epimorphism $\tilde{\lambda}:\pi_1^{orb}(V(\Gamma({\mathbf{v}}), {\mathbf{G(v)}}))\longrightarrow{\mathbb{Z}_{4}}$ equivalent to $\lambda$ such that the following hold:
\begin{list}{(\arabic{qcounter})~}{\usecounter{qcounter}}
\item $\tilde{\lambda}(a_{1})=\cdots=\tilde{\lambda}(a_{r})=1$.
\item $\tilde{\lambda}(b_{1})=\cdots=\tilde{\lambda}(b_{s})=1$.
\item $\tilde{\lambda}(c_i)=0$ for all $1\leq{i}\leq{s}$.
\item $\tilde{\lambda}(d_{1})=\cdots=\tilde{\lambda}(d_{t})=1$.
\item $\tilde{\lambda}(e_{1})=\cdots=\tilde{\lambda}(e_{m})={2}$.
\item $\tilde{\lambda}(f_i)=1$ for all $i\leq{k}$ some $0\leq{k}\leq{m}$.
\item $\tilde{\lambda}(f_i)=0$ for all $k<{i}\leq{m}$.
\item $\tilde{\lambda}(g_{1})=\cdots=\tilde{\lambda}(d_{n})=2$.
\end{list}
\end{lemma}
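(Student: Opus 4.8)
The plan is to produce the required $\tilde{\lambda}$ by precomposing $\lambda$ with a single realizable automorphism $\alpha$, assembled from the generating automorphisms, and then reading off $\tilde{\lambda}=\lambda\circ\alpha$. The essential simplification is that $\mathbb{Z}_{4}$ is abelian, so every conjugating factor $x_j,y_k,u_l,z_q$ in the formulas of Lemma \ref{real} is invisible to $\lambda$: on $\lambda$-values a realizable automorphism acts only through the permutations $\sigma,\tau,\gamma,\xi$, the signs $\varepsilon_j,\delta_k,\varepsilon_l',\delta_q'\in\{\pm1\}$, the twists $v_j$ (adding $v_j\lambda(b_{\sigma(j)})$ to $c_j$) and $w_l$ (adding $w_l\lambda(e_{\gamma(l)})$ to $f_l$), together with arbitrary slides of the $a_i$, which are left unconstrained by Lemma \ref{real}. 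Precomposition with $\alpha$ preserves both surjectivity (the image is unchanged) and the finite injective property (the kernel is carried to $\alpha^{-1}(\ker\lambda)$, still free of rank $g$), so $\tilde{\lambda}$ remains an admissible epimorphism.

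I would first dispose of the conditions forced by injectivity. Since $\ker\lambda$ is torsion free, $\lambda$ is injective on every finite cyclic vertex group; hence $\lambda(e_l)$ and $\lambda(g_q)$ have order $2$ and both equal $2$, giving (5) and (8) with no work, and these values survive the sign and permutation action because $-2=2$. For (2) and (3), $\lambda(b_j)$ has order $4$, so $\lambda(b_j)\in\{1,3\}$; choosing $\varepsilon_j=\pm1$ makes $\tilde{\lambda}(b_j)=\varepsilon_j\lambda(b_j)=1$, and since $\lambda(b_j)$ is a unit I can then solve $v_j\lambda(b_j)\equiv-\varepsilon_j\lambda(c_j)\pmod 4$ for $v_j\in\{0,1,2,3\}$, forcing $\tilde{\lambda}(c_j)=0$ (take $\sigma=\mathrm{id}$). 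Likewise $\lambda(d_k)\in\{1,3\}$ and a sign $\delta_k$ yields $\tilde{\lambda}(d_k)=1$, giving (4).

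For (6) and (7), observe that every realizable automorphism sends $f_l$ to a conjugate of $e_{\gamma(l)}^{w_l}f_{\gamma(l)}^{\varepsilon_l'}$, so $\tilde{\lambda}(f_l)=2w_l+\varepsilon_l'\lambda(f_{\gamma(l)})$; in particular the parity of each $f$-value is only permuted, never destroyed. I would choose $\gamma$ to list the odd-valued $f$'s before the even-valued ones, and then use the twist $w_l\in\{0,1\}$ and, if needed, the sign $\varepsilon_l'$ to standardize each value to $1$ (odd case) or $0$ (even case). This yields the prescribed pattern with $k$ equal to the number of $l$ for which $\lambda(f_l)$ is odd. Inverting $f_l$ simultaneously inverts $e_{\gamma(l)}$, but this is harmless since $\lambda(e_{\gamma(l)})=2$ is fixed by negation.

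The one step demanding genuine care is (1). Its proof rests on the fact that Lemma \ref{real} imposes \emph{no} constraint on the images $\alpha(a_i)$, so realizable automorphisms include arbitrary slides $a_i\mapsto a_i\cdot(\text{conjugate of a generator})$, which at the level of $\lambda$ add that generator's $\lambda$-value to $\tilde{\lambda}(a_i)$ while fixing every other generator, hence do not disturb (2)--(8) already arranged. The hard part is guaranteeing a generator with \emph{unit} image to slide by, and this is exactly where surjectivity enters: if every generator had even image then, using $\lambda(e_l)=\lambda(g_q)=2$, the whole image would lie in $\{0,2\}$, contradicting that $\lambda$ is onto $\mathbb{Z}_{4}$. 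Thus after the previous steps some $b_j$, $d_k$, odd $f_l$, or $a_{i_0}$ has unit image; in the last case I first invert $a_{i_0}$ so its value is $1$. Sliding each remaining $a_i$ by this fixed unit-valued generator the appropriate number of times sets $\tilde{\lambda}(a_i)=1$ for all $i$, establishing (1). Since the realizable automorphisms form a group, the composite of all these moves is itself realizable, which completes the lemma.
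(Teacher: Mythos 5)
Your proposal is correct and takes essentially the same route as the paper's proof: (5) and (8) are forced by injectivity on the torsion vertex groups, signs and twists realize (2)--(4) and (6)--(7), permutations order the $f_i$, and slides of the $a_i$ by a unit-valued element give (1). If anything, you are more careful than the paper at the one delicate point: where the paper simply asserts ``we may assume'' an element $x$ with $\lambda(x)=1$ exists before composing with $\prod\rho_{ji}(x^{-\lambda(a_i)+1})$, you justify this from surjectivity via the parity argument that all-even generator images would confine the image to $\{0,2\}$.
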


\begin{proof}
Let $\lambda:\pi_1^{orb}(V(\Gamma($v$), {\mathbf{G(v)}}))\longrightarrow{\mathbb{Z}_{4}}$ be a finite injective epimorphism. Properties (5) and (8) must occur since $\lambda$ is finite injective. Property (4) follows by composing $\lambda$ with the realizable automorphism \\$\prod\phi_i$, where $\phi_i$ sends the generator $d_i$ to the element $d_i^{{\varepsilon}_i}$ and leaves all other generators fixed. Note that $\varepsilon_i=1$ if $\lambda(d_i)=1$ and $\varepsilon_i=-1$ if $\lambda(d_i)=3$. Property (2) follows by a similar technique. Assuming property (2) holds, property (3) follows by composing $\lambda$ with the realizable automorphism $\prod\phi_i$, where $\phi_i$ sends the generator $c_i$ to the element $b_i^{-\lambda(c_i)}c_i$ and leaves all other generators fixed. To show properties (6) and (7) hold, we may compose $\lambda$ with the realizable automorphism $\prod\phi_i$, where $\phi$ sends the generator $f_i$ to the element $e_i^{z_i}f_i$ and leaves all other generators fixed. Note that $z_i=1$ if $\lambda(f_i)=2$, $z_i=2$ if $\lambda(f_i)=0$ or $\lambda(f_i)=1$, and $z_i=-1$ if $\lambda(f_i)=3$. Furthermore, composing $\lambda$ with the realizable automorphisms $\omega_{ij}$ we may interchange $f_i$ as needed so that the first $k$ generators map to 1 and the last $m-k$ generators map to 0. Finally, to prove property (1) we may assume that there exists an element $x\in G_j$ (where $G_j$ is either $\mathbb{Z}$, $\mathbb{Z}_4$, $\mathbb{Z}_4\times\mathbb{Z}$, or $\mathbb{Z}_2\times\mathbb{Z}$) such that $\lambda(x)=1$. Note that we may compose $\lambda$ with a realizable automorphism that sends $x$ to $x^{-1}$ if needed. Now compose $\lambda$ with the realizable automorphism $\prod\rho_{ji}(x^{-\lambda(a_i)+1})$. It may be shown that $(\lambda\circ\alpha)(a_i)=1$ for all $i$.
\end{proof}

\begin{proposition}
Let ${\mathbf{v}}=(r,s,t,m,n)$ with $m>0$ and let \\$\lambda,\lambda':\pi_1^{orb}(V(\Gamma({\mathbf{v}}),{\mathbf{G(v)}}))\longrightarrow\mathbb{Z}_{4}$ be two finite injective epimorphisms that satisfy the conclusion of Lemma 2.4, where $\lambda(f_i)=1$ for all $1\leq{i}\leq{k}$ and $\lambda'(f_i)=1$ for all $1\leq{i}\leq{k'}$. Then $\lambda$ is equivalent to $\lambda'$ if and only if $k=k'$. 
\end{proposition}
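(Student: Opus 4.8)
The plan is to handle the two directions separately, with essentially all of the content living in the forward implication; the reverse implication is immediate. For the reverse direction, suppose $k=k'$. Since both $\lambda$ and $\lambda'$ satisfy the conclusion of Lemma 2.4, their values are prescribed on \emph{every} generator: $a_i\mapsto 1$, $b_j\mapsto 1$, $c_j\mapsto 0$, $d_k\mapsto 1$, $e_l\mapsto 2$, $g_q\mapsto 2$, and $f_i\mapsto 1$ for $i\le k$ while $f_i\mapsto 0$ for $i>k$. With $k=k'$ these prescriptions coincide, so $\lambda=\lambda'$ on a generating set and hence as homomorphisms; they are then equivalent via the identity homeomorphism.

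For the forward direction I would assume $\lambda$ is equivalent to $\lambda'$, so by the definition of the equivalence relation and Lemma \ref{real} there is a realizable automorphism $\alpha$ with $\lambda'=\lambda\circ\alpha$, where $\alpha(f_l)=u_le_{\gamma(l)}^{w_l}f_{\gamma(l)}^{\varepsilon_l'}u_l^{-1}$ for some $\gamma\in\Sigma_m$, $\varepsilon_l'\in\{+1,-1\}$, and $0\le w_l<2$. The crucial simplification is that $\mathbb{Z}_{4}$ is abelian, so $\lambda$ annihilates conjugation and commutators; evaluating gives
\[
\lambda'(f_l)=\lambda(\alpha(f_l))=w_l\,\lambda(e_{\gamma(l)})+\varepsilon_l'\,\lambda(f_{\gamma(l)}).
\]
Since $\lambda$ satisfies Lemma 2.4, $\lambda(e_{\gamma(l)})=2$, so the term $w_l\lambda(e_{\gamma(l)})=2w_l$ is even and
\[
\lambda'(f_l)\equiv \varepsilon_l'\,\lambda(f_{\gamma(l)})\pmod 2.
\]
Because an element of $\mathbb{Z}_{4}$ is a generator precisely when it is odd, and $\varepsilon_l'=\pm1$ preserves parity, I conclude that $\lambda'(f_l)$ is a generator if and only if $\lambda(f_{\gamma(l)})$ is a generator, i.e. if and only if $\gamma(l)\le k$.

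The count then finishes the argument. By normalization of $\lambda'$, the set $\{l:\lambda'(f_l)\text{ is a generator}\}$ is exactly $\{1,\dots,k'\}$, of size $k'$. By the previous paragraph this same set equals $\{l:\gamma(l)\le k\}=\gamma^{-1}(\{1,\dots,k\})$, and since $\gamma$ is a permutation of $\{1,\dots,m\}$ this preimage has size $k$. Hence $k'=k$, as desired. I do not expect a serious obstacle here: the one point that must be handled with care is the contribution $w_l\lambda(e_{\gamma(l)})$ coming from the $e$-generator appearing in $\alpha(f_l)$, and the argument hinges precisely on the normalization $\lambda(e_l)=2$ guaranteeing this contribution is even so that it cannot convert a non-generator into a generator or vice versa; combined with $\gamma$ being a bijection, this makes the number of $f$'s hitting a generator an honest equivalence invariant, which is exactly $k$.
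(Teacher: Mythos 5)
Your proof is correct, and it rests on the same two pillars as the paper's own argument: the normal form for realizable automorphisms from Lemma 2.2, and the parity observation that the normalization $\lambda(e_{\gamma(l)})=2$ from Lemma 2.4 forces $\lambda'(f_l)\equiv\lambda(f_{\gamma(l)})\pmod 2$. The execution differs in a worthwhile way, however. The paper argues by contradiction at a single index: assuming $k>k'$, it asserts that there must be a realizable automorphism $\alpha$ with $(\lambda\circ\alpha)(f_{k'+1})=0$ and that Lemma 2.2 makes this impossible. As literally stated this is too quick: $\alpha$ may permute the $f$-generators, and $(\lambda\circ\alpha)(f_{k'+1})=2w\pm\lambda(f_{\gamma(k'+1)})$ can perfectly well vanish when $\gamma(k'+1)>k$; the contradiction only materializes after a pigeonhole step showing that \emph{some} index $l>k'$ must satisfy $\gamma(l)\le k$. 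Your version supplies exactly this missing count: parity shows the set $\{l:\lambda'(f_l)\ \text{is a generator}\}$ equals $\gamma^{-1}(\{1,\dots,k\})$, and bijectivity of $\gamma$ then gives $k'=k$ outright, with no case analysis and no contradiction needed. You also fill in the reverse direction, which the paper dispatches with ``then $\lambda=\lambda'$,'' by pointing out that Lemma 2.4 pins down both epimorphisms on every generator once $k=k'$. So the tools are the same, but your direct counting argument is complete where the paper's one-index contradiction is elliptical.
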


\begin{proof}
For a contradiction, assume that $\lambda$ is equivalent to $\lambda'$ and $k\neq{k'}$. Without loss of generality we may assume that $k>k'$. Hence, $\lambda$ maps at least one more generator $f_i$ to 1 as does $\lambda'$. This would mean that there must exist a realizable automorphism $\alpha$ such that $(\lambda\circ\alpha)(f_{k'+1})=0$. By Lemma 2.2, this is impossible. Thus, $k=k'$. For the reverse implication suppose that $k=k'$. Then $\lambda=\lambda'$, proving the proposition.
\end{proof}

We will now prove the main theorem.

\begin{proof}[Proof of Theorem 2.2]
Define $\Delta_0(\mathbb{Z}_{4},V_g,V(\Gamma({\mathbf{v}}),{\mathbf{G(v)}}))$ to be the set of equivalence classes $[(\Gamma({\mathbf{v}}),{\mathbf{G(v)}}),\lambda]$ such that $\lambda(f_i)=0$ for all $1\leq{i}\leq{m}$. Define $\Delta_1(\mathbb{Z}_{4},V_g,V(\Gamma({\mathbf{v}}),{\mathbf{G(v)}}))$ to be the set of equivalence classes $[(\Gamma({\mathbf{v}}),{\mathbf{G(v)}}),\lambda]$ such that $\lambda(f_i)=1$ for at least one $i$ such that $1\leq{i}\leq{m}$. By Lemma 2.3, the delta set $\Delta(\mathbb{Z}_{4},V_g,V(\Gamma({\mathbf{v}}),{\mathbf{G(v)}}))$ is the disjoint union $\Delta_0(\mathbb{Z}_{4},V_g,V(\Gamma({\mathbf{v}}),{\mathbf{G(v)}}))\dot{\bigcup}\Delta_1(\mathbb{Z}_{4},V_g,V(\Gamma({\mathbf{v}}),{\mathbf{G(v)}}))$. Hence, the order of the delta set is the sum of the orders of the two sets $\Delta_0(\mathbb{Z}_{4},V_g,V(\Gamma({\mathbf{v}}),{\mathbf{G(v)}}))$ and $\Delta_1(\mathbb{Z}_{4},V_g,V(\Gamma({\mathbf{v}}),{\mathbf{G(v)}}))$. Applying Lemma 2.4 and Proposition 2.5, $|\Delta_1(\mathbb{Z}_{4},V_g,V(\Gamma({\mathbf{v}}),{\mathbf{G(v)}}))|=m$ and \\$|\Delta_0(\mathbb{Z}_{4},V_g,V(\Gamma({\mathbf{v}}),{\mathbf{G(v)}}))|=1$. Hence by Proposition 2.1, the theorem follows.
\end{proof}

\bibliographystyle{plain}

\end{document}